\numberwithin{equation}{section}
\theoremstyle{plain}
\newtheorem{thm}{Theorem}
\newtheorem{lem}[thm]{Lemma}
\theoremstyle{remark}
\newtheorem{remark}[thm]{Remark}
\newcommand{\drm}{{\mathrm d}}
\newcommand{\be}{\begin{equation}}
\newcommand{\ee}{\end{equation}}
\newcommand{\irm}{{\mathrm i}}
\newcommand{\Lp}{{L_\mathrm{per}}}
\newcommand{\manu}{\cite{BLM}}
\newcommand{\Zz}{\mathbb{Z}}
\title[Non-self-adjoint problems with real spectrum]{On a class of non-self-adjoint periodic boundary value problems with discrete real spectrum}
\author{Lyonell Boulton}
\address{L. B.: Department of Mathematics, Heriot-Watt University\\
and Maxwell Institute for Mathematical Sciences\\
Riccarton, Edinburgh EH14 4AS, United Kingdom}
\email{L.Boulton@hw.ac.uk}
\author{Michael Levitin}
\address{M. L.: Department of Mathematics, University of Reading\\
Whiteknights, PO Box 220, Reading RG6 6AX, United Kingdom}
\email {M.Levitin@reading.ac.uk}
\author{Marco Marletta}
\address{M. M.: Cardiff School of Mathematics, Cardiff University\\
and Wales Institute of Mathematical and Computational Sciences\\ 
Senghennydd Road, Cardiff CF24 4AG, United Kingdom}
\email {Marco.Marletta@cs.cardiff.ac.uk}
\thanks{The research of Michael Levitin was partially supported by the EPSRC grant EP/D054621.}
\date{March 2010}
\begin{document}

\maketitle

\section{Introduction}

We study the operator $L_\mathrm{per}$ defined by
\begin{equation}\label{e.0}
\Lp u := i \epsilon (f(x)u'(x))'+iu'(x) 
\end{equation}
in which $f$ is a given $2\pi$-periodic function having the following properties:
\begin{equation}\label{eq:f_sym}
f(x+\pi) = -f(x)\,,\qquad f(-x) = -f(x)\,; 
\end{equation}
and also 
\begin{equation}\label{eq:f_pos}
f(x)>0\qquad\text{for }x\in (0,\pi)\,. 
\end{equation}
In particular
it follows that $f(\pi\Zz)=0$. We assume that $f$ is continuous, and differentiable 
except possibly at a finite number of points, the points of non-differentiability 
excluding $\pi\Zz$. We assume that $f'(0) = 2/\pi$ and that $0<\epsilon<\pi$. 

 We consider \eqref{e.0} on the domain 
\begin{equation}\label{eq:m2} 
  {\mathcal D} = \{ u \in L^2(-\pi,\pi) \, | \, \Lp u \in L^2(-\pi,\pi); \;\; u(-\pi) = u(\pi) \} . 
\end{equation}

\begin{remark} Of course it is not obvious that functions $u\in L^2(-\pi,\pi)$ such that $\Lp u\in L^2(-\pi,\pi)$ have
boundary values $u(\pm \pi)$; this was proved in \manu, where we showed that if $u\in L^2(-\pi,\pi)$
and $\Lp u\in L^2(-\pi,\pi)$ then $u\in H^1(-\pi,\pi)$.
\end{remark}

The main results of this paper are
\begin{thm}\label{thm:1}
The spectrum of operator $\Lp$ is 
\begin{itemize}
\item[(a)] real
\item[(b)] purely discrete, i.e. it consists only of isolated eigenvalues of finite multiplicity with no accumulation points apart from, possibly, infinity.
\end{itemize}
\end{thm}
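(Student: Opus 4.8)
The plan is to recast the eigenvalue problem as the study of a second-order linear ODE with three regular singular points, at the zeros $x=0$ and $x=\pm\pi$ of $f$, and to extract both assertions from a Frobenius analysis at those points combined with the observation that $\Lp$ is self-adjoint with respect to a natural \emph{indefinite} inner product; the normalisation $f'(0)=2/\pi$ and the bound $0<\epsilon<\pi$ enter precisely as the condition keeping these singular points in the required regime. For part (b), writing $\Lp u=\irm(\epsilon f u'+u)'$, the equation $(\Lp-\lambda)u=0$ becomes $\epsilon f u''+(\epsilon f'+1)u'+\irm\lambda u=0$. Near $x=0$, where $\epsilon f(x)\sim(2\epsilon/\pi)x$, this reads $xu''+(1+\tfrac{\pi}{2\epsilon})u'+\tfrac{\irm\lambda\pi}{2\epsilon}u=0$ to leading order, with indicial exponents $0$ and $-\pi/(2\epsilon)$; since $\epsilon<\pi$ we have $-\pi/(2\epsilon)<-\tfrac12$, so the corresponding Frobenius solution is not in $L^2$ near $0$, and hence (by the Remark and \manu) the space of local solutions belonging to $\mathcal D$ near $x=0$ is one-dimensional. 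Near $x=\pm\pi$, where $\epsilon f(x)\sim\mp(2\epsilon/\pi)(x\mp\pi)$, the exponents are $0$ and $+\pi/(2\epsilon)$, and $\epsilon<\pi$ is exactly what forces both Frobenius solutions into $H^1$, so no condition is imposed at $\pm\pi$ beyond $u(-\pi)=u(\pi)$.

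It follows that for each $\lambda\in\mathbb C$ there is a solution $\psi_\lambda$ of $(\Lp-\lambda)u=0$, unique once normalised by $\psi_\lambda(0)=1$, lying in $H^1(-\pi,\pi)$ and depending analytically on $\lambda$, and $\lambda\in\sigma(\Lp)$ if and only if $D(\lambda):=\psi_\lambda(\pi)-\psi_\lambda(-\pi)=0$. The function $D$ is entire and not identically zero (as one checks, e.g., from the large-$|\lambda|$ behaviour of $\psi_\lambda$); for $D(\lambda)\neq0$ one constructs $(\Lp-\lambda)^{-1}$ by variation of parameters, so $\rho(\Lp)\neq\emptyset$ and $\sigma(\Lp)=D^{-1}(0)$, a set without finite accumulation points. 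The resolvent is moreover compact: $\Lp$ is closed, $\mathcal D\subset H^1(-\pi,\pi)$ by \manu, the inclusion of $\mathcal D$ (with the graph norm) into $H^1(-\pi,\pi)$ is bounded by the closed graph theorem, and $H^1(-\pi,\pi)\hookrightarrow L^2(-\pi,\pi)$ is compact; composing these, $(\Lp-\lambda_0)^{-1}$ is compact for any $\lambda_0\in\rho(\Lp)$, which yields (b).

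For part (a), integration by parts using $f(\pm\pi)=0$ and the Frobenius decay rates at $0$ and $\pm\pi$ (once more it is $\epsilon<\pi$ that makes the boundary terms vanish) gives $\langle\Lp u,v\rangle=\langle u,\widetilde L v\rangle$ for all $u,v\in\mathcal D$, where $\widetilde L v:=-\irm\epsilon(fv')'+\irm v'$; that is, $\Lp^{*}=\widetilde L$. Because $f(x+\pi)=-f(x)$, conjugation by the rotation $R$ of the circle, $(Ru)(x)=u(x+\pi)$ — an $L^2$-unitary involution preserving $\mathcal D$ — carries $\Lp$ to $\widetilde L$, so $\Lp^{*}=R\Lp R^{-1}$; equivalently, $\Lp$ is self-adjoint with respect to the Hermitian but indefinite form $[u,v]:=\langle Ru,v\rangle$. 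If $\Lp u=\lambda u$ with $u\neq0$, then $\lambda[u,u]=[\Lp u,u]=[u,\Lp u]=\overline{\lambda}\,[u,u]$, so $\lambda$ is real as soon as $[u,u]\neq0$. Thus (a) reduces to a non-degeneracy statement: no eigenfunction satisfies $[u,u]=2\,\mathrm{Re}\!\int_{0}^{\pi}u(x)\overline{u(x-\pi)}\,\drm x=0$.

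I expect this non-degeneracy to be the main obstacle, and the point at which $\epsilon<\pi$ is used most delicately. Assuming for contradiction an eigenvalue $\lambda$ with $\mathrm{Im}\,\lambda\neq0$ (hence $[u,u]=0$ by the above, with $\overline{\lambda},-\lambda,-\overline{\lambda}$ also eigenvalues since $\sigma(\Lp)=\overline{\sigma(\Lp)}=-\sigma(\Lp)$ by $\Lp^{*}=R\Lp R^{-1}$ and by the reflection $P\Lp P^{-1}=-\Lp$), the ingredients I would bring to bear are: (i) the identity $(\mathrm{Im}\,\lambda)\|u\|^2=-\epsilon\int_{-\pi}^{\pi}f\,|u'|^2$, obtained by pairing $\Lp u=\lambda u$ with $u$; (ii) the substitution $u\mapsto\sigma^{-1/2}u$ with $\sigma=\exp\!\int\tfrac{\drm x}{\epsilon f}$, which conjugates $\Lp$ to $\irm A$, where $A$ is the formally symmetric but sign-indefinite Sturm--Liouville operator $Av=(\epsilon f v')'-\tfrac{1}{4\epsilon f}v$, the condition $\epsilon<\pi$ being exactly what keeps the transformed functions admissible at $0$ and $\pm\pi$; and (iii) the reflection $f(-x)=-f(x)$, which lets one fold the problem onto $(0,\pi)$ and compare the two halves. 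The genuinely hard step is (iii): turning these into an a priori bound that forces $[u,u]\neq0$ for \emph{all} $0<\epsilon<\pi$, rather than only for small $\epsilon$, where a perturbative continuation from the self-adjoint case $\epsilon=0$, with $\sigma(\Lp)=\mathbb Z$, would already suffice.
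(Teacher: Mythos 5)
Your treatment splits cleanly into the two parts, and they fare very differently.

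\textbf{Part (b).} Your route to compactness — $\mathcal D\subset H^1(-\pi,\pi)$ by \cite{BLM}, then closed graph theorem to bound the graph-norm inclusion $\mathcal D\hookrightarrow H^1$, then Rellich compactness of $H^1\hookrightarrow L^2$ — is a genuinely different and shorter argument than the paper's. The paper instead constructs the Green function $G$ from the Frobenius solutions $\phi,\psi$ and verifies directly, via the exponent cancellation $|x|^{-c/\epsilon}|s|^{c/\epsilon}\le C$ on $|s|\le|x|$, that $G\in L^\infty$, hence that $(\Lp-\lambda)^{-1}$ is Hilbert--Schmidt. Your argument is cleaner and avoids the Green function estimates entirely, at the cost of two prerequisites you leave implicit: (i) that $\Lp$ is closed (so that $\mathcal D$ with graph norm is Banach and the closed graph theorem applies), and (ii) that $\rho(\Lp)\neq\emptyset$. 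For (ii) your appeal to ``the large-$|\lambda|$ behaviour of $\psi_\lambda$'' is a hand-wave; the clean route (and the one the paper implicitly uses) is to quote \cite{BLM} that all eigenvalues are real, so that any non-real $\lambda$ lies in $\rho(\Lp)$. What the paper's explicit Green function buys, beyond discreteness, is the Schatten-class estimates of Section~4 ($(\lambda-\Lp)^{-1}\in\mathcal C_p$ for all $p>1$), which your soft argument cannot reach. Your Frobenius exponent computation ($0,\ -\pi/(2\epsilon)$ at $x=0$; $0,\ \pi/(2\epsilon)$ at $x=\pm\pi$) and the role of $\epsilon<\pi$ are correct and match the paper.

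\textbf{Part (a).} Here there is a genuine gap, and you say so yourself. You correctly identify the algebraic structure $\Lp^{*}=R\Lp R^{-1}$ with $(Ru)(x)=u(x+\pi)$, hence that $\Lp$ is self-adjoint with respect to the indefinite form $[u,v]=\langle Ru,v\rangle$, and that reality of an eigenvalue follows whenever the corresponding eigenfunction satisfies $[u,u]\neq0$. But you do not prove this non-degeneracy; you label it ``the main obstacle'' and ``the genuinely hard step'' and leave it open. As stated, therefore, part (a) is not proved in your proposal. The paper handles this by citing \cite{BLM}, where reality of all eigenvalues was established; in the present paper part (a) is obtained by combining that result with part (b), which upgrades ``all eigenvalues are real'' to ``the whole spectrum is real'' once the spectrum is shown to consist only of eigenvalues. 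You should do the same: either cite \cite{BLM} for the reality of eigenvalues, or actually supply the non-degeneracy argument, but the sketch in (i)--(iii) of your last paragraph does not yet constitute a proof.
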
 

Part (a) has been partially proved in \manu, where we showed that all the \emph{eigenvalues} (if they exist) are real. 
The rest of Theorem \ref{thm:1} follows from
\begin{thm}\label{thm:2}
The resolvent  $(\Lp-\lambda)^{-1}$ is a compact operator on $L^2(-\pi,\pi)$ if $\lambda$ is not an eigenvalue of $\Lp$.
\end{thm}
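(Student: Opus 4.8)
\emph{Strategy.} The plan is to reduce the compactness of $(\Lp-\lambda)^{-1}$ to two ingredients — the compact embedding $\mathcal D\hookrightarrow L^2(-\pi,\pi)$, and the bijectivity of $\Lp-\lambda$ when $\lambda$ is not an eigenvalue — and to establish the second by analysing the ODE $(\Lp-\lambda)u=g$ near the three zeros of $f$. For the reduction I would first invoke \manu: $\mathcal D\subset H^1(-\pi,\pi)$, together with an estimate $\|u\|_{H^1}\le C(\|\Lp u\|_{L^2}+\|u\|_{L^2})$ on $\mathcal D$, so that $\Lp$ is closed and $\mathcal D$ (in the graph norm) embeds compactly in $L^2(-\pi,\pi)$ by Rellich's theorem. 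It then suffices to show that $\Lp-\lambda\colon\mathcal D\to L^2(-\pi,\pi)$ is bijective whenever $\lambda$ is not an eigenvalue; the bounded inverse theorem makes $(\Lp-\lambda)^{-1}$ bounded from $L^2$ into $\mathcal D$, hence into $H^1$, and composing with the Rellich embedding gives compactness on $L^2$. Injectivity of $\Lp-\lambda$ is exactly the hypothesis, so only surjectivity remains.

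\emph{The singular points.} After dividing by $i\epsilon f$, the equation $(\Lp-\lambda)u=g$ becomes
\[
u''+\Bigl(\frac{f'}{f}+\frac1{\epsilon f}\Bigr)u'+\frac{i\lambda}{\epsilon f}\,u=\frac{g}{i\epsilon f},
\]
which is regular on $(-\pi,\pi)$ except at $-\pi,0,\pi$, where $f$ has a simple zero; these are regular singular points. At a simple zero $x_*$ of $f$ with $f'(x_*)=a\ne0$ the indicial exponents are $0$ and $-1/(\epsilon a)$. At $x=0$, $a=f'(0)=2/\pi$, so the exponents are $0$ and $-\pi/(2\epsilon)$; as $0<\epsilon<\pi$ we have $-\pi/(2\epsilon)<-1/2$, so one Frobenius solution is analytic at $0$ and the other behaves like $|x|^{-\pi/(2\epsilon)}$, which is not in $L^2$ near $0$. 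Hence $0$ is a limit-point singularity for each of $(-\pi,0)$ and $(0,\pi)$, and any $u\in\mathcal D\subset L^2$ must be, near $0$, a multiple of the analytic solution on each side; in particular $u$ is continuous at $0$ and $fu'$ vanishes there, so $\Lp u$ carries no $\delta$ at $0$. By \eqref{eq:f_sym}, $f'(\pm\pi)=-2/\pi$, so the exponents at $\pm\pi$ are $0$ and $+\pi/(2\epsilon)$, both $\ge0$; both Frobenius solutions are then bounded near $\pm\pi$ and (since $\epsilon<\pi$) lie in $H^1$ there, so $\pm\pi$ are limit-circle endpoints at which $\mathcal D$ imposes no local constraint, the condition $u(-\pi)=u(\pi)$ serving as a coupled boundary condition.

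\emph{Constructing the inverse.} With $\lambda$ not an eigenvalue and $g\in L^2$ given, I would, on $(0,\pi)$, take a fundamental system of the homogeneous equation and build by variation of parameters — anchoring the integrals suitably and then correcting by a homogeneous solution — a particular solution that is bounded, hence in $L^2(0,\pi)$, up to both ends; near the limit-circle end $\pi$ the elementary bound $(\pi-x)^{\pi/(2\epsilon)}(\pi-y)^{-\pi/(2\epsilon)}\le1$ for $y\le x<\pi$, together with $g\in L^2\subset L^1$, tames the integrals, while near the limit-point end $0$ a Cauchy--Schwarz estimate suffices. I would do the same on $(-\pi,0)$. The solutions of $(\Lp-\lambda)u=g$ lying in $\mathcal D$ are then parametrised by adding to these particular solutions multiples of the solution bounded at $0$ on each subinterval — two free constants — subject to the two linear conditions ``$u$ continuous at $0$'' and ``$u(-\pi)=u(\pi)$''. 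The determinant $\Delta(\lambda)$ of the resulting $2\times2$ system vanishes exactly when $(\Lp-\lambda)u=0$ has a nonzero solution in $\mathcal D$, i.e. exactly at the eigenvalues of $\Lp$; since $\lambda$ is not one, $\Delta(\lambda)\ne0$, the system is uniquely solvable, and the corresponding $u\in\mathcal D$ satisfies $(\Lp-\lambda)u=g$. This yields surjectivity and completes the proof.

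\emph{Where the difficulty lies.} The main obstacle is the local analysis of the third step: pinning down the indicial exponents at $-\pi,0,\pi$, recognising that $0$ is limit-point while $\pm\pi$ are limit-circle — which is precisely where the sign of $f'$ at each zero of $f$, and the hypothesis $0<\epsilon<\pi$, are used — and checking that the variation-of-parameters solution genuinely belongs to $\mathcal D$; after that, the gluing at $0$ and the periodic-type condition at $\pm\pi$ reduce everything to $2\times2$ linear algebra. I would also flag that $(\Lp-\lambda)^{-1}$ is in general \emph{not} Hilbert--Schmidt — its integral kernel is not square integrable near the corners $(\pm\pi,\pm\pi)$ when $\epsilon<\pi$ — so compactness must be obtained through the embedding $\mathcal D\hookrightarrow H^1(-\pi,\pi)$ and Rellich's theorem rather than from a kernel estimate.
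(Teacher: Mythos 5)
Your strategy is a genuinely different route from the paper's, and it would, if completed, give the result. The paper constructs the Green function explicitly after passing to the self-adjoint form \eqref{e.2}, using the two Frobenius solutions $\phi$ (bounded at $0$) and $\psi$ (vanishing at $\pm\pi$, blowing up like $|x|^{-c/\epsilon}$ at $0$) normalized so the Wronskian is $1$, and then shows by direct estimate, using \eqref{eq:p_behaviour}, \eqref{eq:phi_behaviour}, \eqref{eq:psi_behaviour}, that the kernel $G(x,s)$ is \emph{bounded} on $[-\pi,\pi]^2$ and hence Hilbert--Schmidt; compactness is immediate. Your scheme instead establishes (i) closedness of $\Lp$ and continuity of the embedding of the graph-norm domain into $H^1$, (ii) Rellich compactness of $H^1\hookrightarrow L^2$, and (iii) bijectivity of $\Lp-\lambda$ by a Frobenius/variation-of-parameters analysis, then concludes by closed graph theorem. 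This buys you a compactness conclusion that does not depend on any kernel bound, at the price of more abstract-operator machinery. Two remarks on this route: you need the embedding bound on the \emph{maximal} domain first (so that the boundary functional $u\mapsto u(\pi)-u(-\pi)$ is graph-continuous and the restricted operator is closed), and the hard analytical content of step (iii) — that the particular solution actually lands in $\mathcal D$ and that the $2\times2$ determinant vanishes only at eigenvalues — is exactly the work the paper does in Section~\ref{sec:Green} and should not be waved through; filling it in would essentially reproduce the estimates \eqref{eq:first_integral}--\eqref{eq:first_integral_der}.

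Your final paragraph, however, contains a concrete error: you assert that the integral kernel of $(\Lp-\lambda)^{-1}$ is not square integrable near the corners $(\pm\pi,\pm\pi)$ when $\epsilon<\pi$, and that this rules out the kernel approach. This is false. Near $s=\pi$ the forcing weight $p(s)/f(s)\sim(\pi-s)^{-c/\epsilon}$ does blow up, but it is multiplied in $G_{\mathrm{II}}$ and $G_{\mathrm{III}}$ by $\psi(s)\sim(\pi-s)^{c/\epsilon}$, and this cancellation makes $\psi(s)p(s)/f(s)$ bounded there; the same compensation occurs near $s=-\pi$. Similarly, in $G_{\mathrm{I}}$ the blow-up $\psi(x)\sim|x|^{-c/\epsilon}$ near $x=0$ is cancelled on the region $|x|\ge|s|$ by $\phi(s)p(s)/f(s)\sim|s|^{c/\epsilon}$. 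The paper exploits precisely these cancellations to show $G$ is bounded, hence Hilbert--Schmidt; indeed, the next section improves this to $(\lambda-\Lp)^{-1}\in\mathcal{C}_p$ for all $p>1$.
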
 

\begin{remark} The spectrum is always non-empty, as zero is an eigenvalue corresponding to a constant eigenfunction.
\end{remark}

In order to prove Theorem \ref{thm:2}, we show that when
$\lambda$ is not an eigenvalue of $\Lp$ (which is guaranteed, for instance, if $\lambda$ is not real)
then  the boundary
value problem 
\begin{equation} \label{e.1}
    i \epsilon (f(x)u'(x))'+iu'(x)-\lambda u(x)=F(x) \qquad
    -\pi<x<\pi 
\end{equation} 
with periodic boundary conditions $u(-\pi)=u(\pi)$ has a unique solution $u\in\mathcal{D}$ for every $F\in L^2(-\pi,\pi)$. 

The compactness of the resolvent is demonstrated by an ``explicit'' construction of a bounded Green function $G(x,s)$ such that $u(x)=\int_{-\pi}^\pi G(x,s)F(s)\,\drm s$.  The properties of $G$ are established by studying the solutions of an associated homogeneous equation in Sections \ref{sec:Frob} and \ref{sec:Green}.

\subsection*{Motivation and scope of the present paper}

Our interest in the operator \eqref{e.0} with domain determined by condition \eqref{eq:f_sym} is primarily motivated by \cite{Benilov}
and \cite{Davies}, where $f(x)=(2/\pi)\sin  x$, and  therefore \eqref{e.1} takes the form 
\be i\tilde\epsilon \left(\sin(x) u'(x)\right)' + i u'(x) -
\lambda u(x)=F(x), 
\label{eq:1sin}
\ee
with $0<\tilde \epsilon<2$. This equation arises in fluid dynamics, and describes small oscillations of a thin layer of fluid inside a rotating cylinder.
From a purely theoretical perspective, the eigenvalue problem 
associated to $L_\mathrm{per}$ has recently drawn a substantial amount of attention: see \cite{ChuPel}, \cite{Weir}, \cite{Weir2}, \cite{Davies-Weir} and \cite{ChuEtal}. Despite the fact that \eqref{eq:1sin} is highly non-self-adjoint, the spectrum of \eqref{eq:1sin} consists exclusively of real eigenvalues of finite multiplicity, it is symmetric with respect to the origin and it accumulates at $\pm \infty$. It is also known that the eigenfunctions do not form an unconditional basis of $L^2(-\pi,\pi)$.  Moreover, Davies and Weir \cite{Davies-Weir} have recently found explicit asymptotics of the eigenvalues as $\tilde{\epsilon}\to 0$.

In \cite{BLM} we established that the eigenvalues of $L_\text{per}$ are all real 
and form a symmetric set with respect to the origin. Theorem~\ref{thm:1} above
rules out completely the possibility of a non-empty essential spectrum and it
answers an open question posed in our previous paper. 

\section{A na\"{\i}ve Frobenius analysis}\label{sec:Frob}

Let $p$ satisfy the integrating factor equation
\be
     \frac{p'}{p}=\frac{f'}{f}+\frac{1}{\epsilon f}.
\label{eq:pdef}\ee
Then $u(x)$ is a solution of \eqref{e.1} iff
\begin{equation} \label{e.2}
     (p(x)u'(x))'+\frac{i\lambda p}{\epsilon f}u(x) = -\left(\frac{ip}{\epsilon f}F\right)(x)\qquad -\pi<x<\pi.
\end{equation}
For future use, we also recall the homogeneous differential equation
\begin{equation} \label{e.2h}\tag{\ref{e.2}$'$}
     (p(x)u'(x))'+\frac{i\lambda p}{\epsilon f}u(x) =0\qquad -\pi<x<\pi
\end{equation}
In order to understand how $p$ behaves near $x=0$ and $x=\pi$ it is useful to consider a simple model.
Suppose that near $x=0$, the function $f$ satisfies $f(x) = 2x/\pi$ -- this maintains the normalization $f'(0)=2/\pi$.
Then (\ref{eq:pdef}) yields $\log p = \log(2x/\pi) + \pi/(2\epsilon)\log(x)$, whence $p(x) = Cx^{1+c/\epsilon}$ where
$C$ is an arbitrary non-zero constant and $c=\pi/2$. Similarly, near $x=\pi$, we can consider a simple
model $f(x) = (2/\pi)(\pi-x)$ and obtain $p(x) = \tilde{C}(\pi-x)^{1-c/\epsilon}$. In \manu\ we proved that, under the minimal assumptions (\ref{eq:f_sym}) on $f$, the following results hold:
\begin{equation}\label{eq:p_behaviour}
    p(x)\sim \left\{\begin{array}{ll}
              x^{1+c/\epsilon} & x\sim 0 \\
              (\pi-x)^{1-c/\epsilon} & x\sim \pi, \end{array} \right.       \qquad
              \text{and} \qquad
    \frac{p(x)}{f(x)}\sim \left\{\begin{array}{ll}
              x^{c/\epsilon} & x\sim 0 \\
              (\pi-x)^{-c/\epsilon} & x\sim \pi. \end{array} \right.               
\end{equation}

By considering the model $f(x) = 2x/\pi$, $p(x) = x^{1+c/\epsilon}$ in a neighbourhood of the origin and looking for solutions of the
differential equation (\ref{e.2h}) in the form $u(x) = x^{\nu}(1+a_1x+a_2x^2+\cdots)$ one can establish the asymptotic behaviour
of solutions for this model in a neighbourhood of the origin; similarly near $x=\pi$. 
In \manu\ we show that, under hypotheses (\ref{eq:f_sym}) on $f$,  there exist solutions $u$ of  \eqref{e.2h} such that
\[
    u(x,\lambda)\sim \left\{\begin{array}{ll}
              x^{-c/\epsilon} \text{ or }1 & x\sim 0 \\
              1\text{ or }(\pi-x)^{c/\epsilon} & x\sim \pi \\
              1\text{ or }(x+\pi)^{c/\epsilon} & x \sim -\pi\end{array} \right.  
\]
This implies the existence of a unique solution $\phi(x)=\phi(x,\lambda)$ of  \eqref{e.2h}, such that
\begin{equation}\label{eq:phi_behaviour}
\phi(x,\lambda) \sim 1, \;\;\; x\rightarrow 0.
\end{equation}
The following result is crucial for reducing the problem to the interval $(0,\pi)$; in a sense it plays the 
same role in the analysis as the orthogonal splitting of the infinite matrix operator in Davies \cite{Davies} in
$\ell^2(\mathbb{Z})$ into three operators, hence reducing the problem to a problem in $\ell^2(\mathbb{N})$.
\begin{lem}
The solution $\phi$ has the symmetry property
\begin{equation} \label{eq:m1} 
\phi(-x,\lambda) = \phi(x,-\lambda). 
\end{equation}
\end{lem}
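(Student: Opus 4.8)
The plan is to exploit the reflection $x\mapsto -x$, under which hypothesis \eqref{eq:f_sym} makes $f$ odd, together with the uniqueness of the normalized solution $\phi$. Put $\psi(x):=\phi(-x,\lambda)$ for $x\in(-\pi,\pi)$. I will check that (i) $\psi$ solves the homogeneous equation \eqref{e.2h} with $\lambda$ replaced by $-\lambda$, and (ii) $\psi(x)\sim 1$ as $x\to 0$; the claimed identity \eqref{eq:m1} is then immediate, since $\phi(\,\cdot\,,-\lambda)$ is by definition the unique solution of \eqref{e.2h} at parameter $-\lambda$ with that normalization.

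For (i) it is cleanest to use the original form of the homogeneous equation, $i\epsilon(f(x)u'(x))'+iu'(x)-\lambda u(x)=0$ (this is \eqref{e.1} with $F\equiv 0$, equivalent to \eqref{e.2h} via the integrating factor \eqref{eq:pdef}). From $\psi(x)=\phi(-x,\lambda)$ one has $\psi'(x)=-\phi'(-x,\lambda)$, and oddness of $f$ gives $f(x)\psi'(x)=f(-x)\phi'(-x,\lambda)=\bigl(f\,\phi'(\,\cdot\,,\lambda)\bigr)(-x)$, whence $\bigl(f(x)\psi'(x)\bigr)'=-\bigl(f\,\phi'(\,\cdot\,,\lambda)\bigr)'(-x)$. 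Inserting these into $i\epsilon\bigl(f(x)\psi'(x)\bigr)'+i\psi'(x)-(-\lambda)\psi(x)$ and pulling out an overall factor $-1$ yields precisely the left-hand side of the homogeneous equation for $\phi(\,\cdot\,,\lambda)$ evaluated at the point $-x$, which is $0$. Hence $\psi$ satisfies \eqref{e.2h} with parameter $-\lambda$. (Equivalently, one notes from \eqref{eq:pdef} that $f'/f+1/(\epsilon f)$ is odd, so $p$ may be taken even, and then performs the same substitution directly in \eqref{e.2h}.)

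For (ii), \eqref{eq:phi_behaviour} gives $\phi(y,\lambda)\to 1$ as $y\to 0$, so $\psi(x)=\phi(-x,\lambda)\sim 1$ as $x\to 0$. Since $c/\epsilon>0$, the second local solution near the singular point $x=0$ behaves like $x^{-c/\epsilon}$ and is unbounded there, so the solution of \eqref{e.2h} at parameter $-\lambda$ that is asymptotic to $1$ at the origin is unique, namely $\phi(\,\cdot\,,-\lambda)$. Therefore $\psi\equiv\phi(\,\cdot\,,-\lambda)$, i.e.\ $\phi(-x,\lambda)=\phi(x,-\lambda)$. The argument is entirely routine; the only point meriting care is the appeal to uniqueness, which rests on the Frobenius analysis recalled before \eqref{eq:phi_behaviour} (that ``bounded near $x=0$'' pins down $\phi$ up to a constant) rather than on any new input.
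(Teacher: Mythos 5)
Your argument is essentially identical to the paper's: define the reflection $v(x)=\phi(-x,\lambda)$, use the oddness of $f$ from \eqref{eq:f_sym} to show $v$ solves the homogeneous equation with $\lambda$ replaced by $-\lambda$, note $v(0)=1$, and invoke uniqueness of the normalized bounded-at-origin solution; the paper merely leaves the ``direct calculation'' implicit where you spell it out. One small stylistic caution: you reuse the symbol $\psi$ for your auxiliary function, which clashes with the paper's already-reserved $\psi(x,\lambda)$ for the second (singular) Frobenius solution, so a different letter (e.g.\ $v$ as in the paper) would avoid confusion.
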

\begin{proof}
Define a function $v(x) = \phi(-x,\lambda)$. A direct calculation shows, thanks to the symmetry conditions
(\ref{eq:f_sym}), that $v$ satisfies (\ref{e.2h}) but with $\lambda$ on the right hand side replaced by 
$-\lambda$. It also satisfies $v(0)=1$. However eqn. (\ref{e.2h}) has only one solution with this property,
namely $\phi(x,-\lambda)$. This proves the result.
\end{proof}

We emphasize the important fact that  $\lambda$ is an eigenvalue of \eqref{e.2h} if and only if $\phi$ possesses the additional symmetry property
\begin{equation}\label{eq:ev_cond}
\phi(-\pi,\lambda) = \phi(\pi,\lambda).
\end{equation}

In \manu\ we also show that there is also a second solution 
$\psi(x)=\psi(x,\lambda)$ of  \eqref{e.2h} satisfying
\begin{equation}\label{eq:psi_behaviour}
\psi(x,\lambda)\sim \left\{\begin{array}{ll}
              |x|^{-c/\epsilon} & x\sim 0 \\
              | x\mp\pi |^{c/\epsilon} & x\sim \pm \pi\end{array} \right.         
\end{equation}
Observe that $\psi(-\pi, \lambda)=\psi(\pi,  \lambda)=0$, and that $\psi(x, \lambda)$ blows up when $x\sim 0$, at least when $\lambda$
is not an eigenvalue.

Consequently, when $\lambda$
is not an eigenvalue, we can also normalize $\psi(x,\lambda)$ by the condition
\[
      p(x)\psi'(x)\phi(x)-p(x)\phi'(x)\psi(x)=1   \qquad -\pi<x<\pi.
\]
The Wronskian in the  right-hand side here is obviously a constant, and below we will always assume that $\psi(x,\lambda)$ satisfies this condition.

\section{Construction of the Green function}\label{sec:Green}

Now, we are back to constructing explicitly the $L^2$ solution $u(x,\lambda)$ of \eqref{e.2} assuming that \eqref{eq:ev_cond} fails.
By the standard variation of parameters technique the general solution of \eqref{e.2} takes the form
\begin{equation}\label{eq:gen_sol}
\begin{split}
u(x,\lambda) &= \psi(x,\lambda)\int_0^x \phi(s,\lambda)\left(\frac{-\irm p}{\epsilon f}F\right)(s)\,\drm s\\
&+\phi(x,\lambda)\int_x^\pi \psi(s,\lambda)\left(\frac{-\irm p}{\epsilon f}F\right)(s)\,\drm s\\
&+A\phi(x)+B\psi(x)\,,
\end{split}
\end{equation}
where $A$ and $B$ are arbitrary complex constants.

It remains to check that one can choose constants $A$ and $B$ in such a way that 
\begin{enumerate}
\item[(a)] $u\in L^2(-\pi,\pi)$,
\item[(b)] $\Lp u\in L^2(-\pi,\pi)$, and 
\item[(c)] $u(-\pi, \lambda)=u(\pi, \lambda)$. 
\end{enumerate}
It is in fact sufficient, and easier, to check that  one can choose constants $A$ and $B$ such that 
\begin{enumerate}
\item[(a')] $u(x,\lambda)$ is continuous at $x=0$,
\item[(b')] $\irm\epsilon f u'+\irm u$ is continuous at $x=0$,
\end{enumerate}
and (c) all hold. Indeed, \eqref{eq:gen_sol} together with (a') implies (a), and together with (b') and (c) implies (b).

Note first, that by (a') and the behaviour of $\psi$ near the origin, one is tempted to take $B=0$  in \eqref{eq:gen_sol}. We shall show that this choice is indeed the right one by the careful analysis of the remaining terms in  \eqref{eq:gen_sol}. 

By the Cauchy-Schwarz inequality and \eqref{eq:p_behaviour},  \eqref{eq:phi_behaviour},  \eqref{eq:psi_behaviour} we have,
\begin{equation}\label{eq:first_integral}
\begin{split}
&\left|\psi(x,\lambda)\int_0^x \phi(s,\lambda)\left(\frac{-\irm p}{\epsilon f}F\right)(s)\,\drm s\right|\\
&\quad\le |\psi(x,\lambda)|\,\left(\int_0^x  |\phi(s,\lambda)|^2 \left|\frac{-\irm p}{\epsilon f}\right|^2\,\drm s\right)^{1/2}
\left(\int_0^x |F(s)|^2\,\drm s\right)^{1/2}\\
&\quad\le C |x|^{-c/\epsilon} |x|^{c/\epsilon+1/2}\left(\int_0^x |F(s)|^2\,\drm s\right)^{1/2}\\
&\quad\le C|x|^{1/2}\|F\|\,.
\end{split}
\end{equation}
Here, and  throughout the rest of this paper, $C$ denotes a generic positive
constant; $\| \cdot \|$ is the standard norm in $L^2(-\pi,\pi)$.

Similarly, 
\begin{equation}\label{eq:second_integral}
\begin{split}
&\left|\phi(x,\lambda)\int_x^\pi \psi(s,\lambda)\left(\frac{-\irm p}{\epsilon f}F\right)(s)\,\drm s\right|\\
&\quad\le |\phi(x,\lambda)|\,\left(\int_x^\pi |\psi(s,\lambda)|^2 \left|\frac{-\irm p}{\epsilon f}\right|^2\,\drm s\right)^{1/2}
\left(\int_x^\pi |F(s)|^2\,\drm s\right)^{1/2}\\
&\quad\le C \cdot 1\cdot |\pi-x|^{1/2}\left(\int_x^\pi |F(s)|^2\,\drm s\right)^{1/2}\\
&\quad\le C|\pi-x|^{1/2}\|F\|\,,
\end{split}
\end{equation}
so it is bounded. It is also continuous as the integrand is a product of a bounded function $\psi p/f$ and an $L^2$ function $F$.

Also, $\phi$ is continuous at zero, and $\psi$ is not, and therefore (a') holds if and only if $B=0$. 

To check the condition (b'), it is now sufficient to verify that $fu'$ is continuous at zero. Differentiating  \eqref{eq:gen_sol} with respect to $x$ gives
\begin{equation}\label{eq:gen_sol_der}
\begin{split}
fu'(x,\lambda) &= f\psi'(x,\lambda)\int_0^x \phi(s,\lambda)\left(\frac{-\irm p}{\epsilon f}F\right)(s)\,\drm s\\
&+f\phi'(x,\lambda)\int_x^\pi \psi(s,\lambda)\left(\frac{-\irm p}{\epsilon f}F\right)(s)\,\drm s\\
&+Af\phi'(x)\,,
\end{split}
\end{equation}
as the contributions from differentiating the integrals cancel out.

The last two terms go to zero as $x\to 0$, and we only need to check the continuity of the first term. Similarly to 
\eqref{eq:first_integral}, we get
\begin{equation}\label{eq:first_integral_der}
\begin{split}
&\left|f(x)\psi'(x,\lambda)\int_0^x \phi(s,\lambda)\left(\frac{-\irm p}{\epsilon f}F\right)(s)\,\drm s\right|\\
&\quad\le |f(x)\psi'(x,\lambda)|\,\left(\int_0^x  |\phi(s,\lambda)|^2 \left|\frac{-\irm p}{\epsilon f}\right|^2\,\drm s\right)^{1/2}
\left(\int_0^x |F(s)|^2\,\drm s\right)^{1/2}\\
&\quad\le C |x| |x|^{-c/\epsilon-1}|x|^{c/\epsilon+1/2}\left(\int_0^x |F(s)|^2\,\drm s\right)^{1/2}\\
&\quad\le C|x|^{1/2}\|F\|\,,
\end{split}
\end{equation}
which proves (b').

Finally, we need to guarantee that we can choose a value of constant $A$ to ensure that condition (c) holds. Direct substitution, again taking account of \eqref{eq:p_behaviour},  \eqref{eq:phi_behaviour},  \eqref{eq:psi_behaviour} gives
\begin{align*}
u(\pi,\lambda)&=A\phi(\pi,\lambda)\,,\\
u(-\pi,\lambda)&=A\phi(-\pi,\lambda)+\phi(-\pi,\lambda)\int_{-\pi}^\pi \psi(s,\lambda)\left(\frac{-\irm p}{\epsilon f}F\right)(s)\,\drm s,
\end{align*}
and as $\lambda$ is assumed not to be an eigenvalue, and therefore \eqref{eq:ev_cond} is not satisfied, we can choose
\begin{equation}\label{eq:A}
A=\int_{-\pi}^\pi \psi(s,\lambda)\left(\frac{-\irm p}{\epsilon f}F\right)(s)\,\drm s \left/ \left(\displaystyle\frac{\phi(\pi,\lambda)}{\phi(-\pi,\lambda)}-1\right)\right.\,.
\end{equation}

This  proves the existence of the resolvent of $(\Lp-\lambda)^{-1}: L_2(-\pi,\pi)\to L_2(-\pi,\pi)$ for each $\lambda$ which is not an eigenvalue of $\Lp$. Thus, the spectrum of $\Lp$ is pure point and real.

Now we proceed to writing down the expression for the Green function $G(x,s)$ of $\Lp$. We note that \eqref{eq:gen_sol} can be written, with account of $B=0$ and \eqref{eq:A}, as
\[
u(x,\lambda) = \int_{-\pi}^\pi G(x,s) F(s)\,\drm s=\int_{-\pi}^\pi (G_\mathrm{I}(x,s)+G_\mathrm{II}(x,s)+G_\mathrm{III}(x,s))F(s)\,\drm s
\]
where we set $G(x,s) :=G_\mathrm{I}(x,s)+G_\mathrm{II}(x,s)+G_\mathrm{III}(x,s)$\,, with
\begin{align*}
G_\mathrm{I}(x,s) &:= \begin{cases} 
\psi(x,\lambda) \phi(s,\lambda)\left(\frac{-\irm p(s)}{\epsilon f(s)}\right)\quad&\text{if }|x|\ge|s|\,,\\
0&\text{otherwise};
\end{cases}\\
G_\mathrm{II}(x,s) &:= \begin{cases} 
\phi(x,\lambda) \psi(s,\lambda)\left(\frac{-\irm p(s)}{\epsilon f(s)}\right)\quad&\text{if } x\le s\,,\\
0&\text{otherwise};
\end{cases}\\
G_\mathrm{III}(x,s) &:= \phi(x,\lambda) \psi(s,\lambda)\left(\frac{-\irm p(s)}{\epsilon f(s)}\right) 
/ \left(\displaystyle\frac{\phi(\pi,\lambda)}{\phi(-\pi,\lambda)}-1\right)\quad\text{for all }-\pi\le x,s\le \pi\,.\\
\end{align*}

The functions $G_\mathrm{II}(x,s)$ and $G_\mathrm{III}(x,s)$ are bounded by \eqref{eq:p_behaviour},  \eqref{eq:phi_behaviour}, \eqref{eq:psi_behaviour}, so we need to look at $G_\mathrm{I}(x,s)$. The only scope for trouble in the expression for 
$G_{\mathrm I}$ lies in the fact that $\psi(x,\lambda)$ blows up as $x\rightarrow 0$. However if $x$ is small then, in
the region $|x|\geq |s|$ where $G_{\mathrm I}$ is nonzero, \eqref{eq:psi_behaviour} and \eqref{eq:p_behaviour} yield
\[ \left|\psi(x,\lambda) \frac{p(s)}{f(s)}\right| \leq C |x|^{-c/\epsilon} |s|^{c/\epsilon} \leq C. \]

Thus $G$ is bounded and hence is the kernel of a compact operator on $L^2(-\pi,\pi)$. Thus $\Lp$ has compact resolvent and purely discrete real spectrum.

\section{Schatten class properties of the Green function}

We first recall the standard notion of Schatten class operator.
Let $T$ be a compact operator and consider its ($\infty$-dimensional)
singular value decomposition (see e.g. \cite{Gohberg-Krein}):
\[
     T=\sum _{j=0}^\infty \alpha_j|v_j\rangle \langle w_j|
\]    
where the singular values $\alpha_j\geq 0$ and the two sets of vector $\{v_j\}$ and $\{w_j\}$ are orthonormal and not necessarily equal.
Here we use the bra-ket notation: $|v\rangle \langle w| u=\langle u,w\rangle v$.
 For $p>0$, we say that $T$ is in the $p$-Schatten class, $T\in\mathcal{C}_p$, if
\[
      \|T\|_p:= \left(\sum_{j=0}^\infty \alpha_j^p\right)^{1/p}<\infty.
\]
Note that $\mathcal{C}_1$ are the trace class operators and $\mathcal{C}_2$
are the Hilbert-Schmidt operators. 

\begin{thm}
The resolvent $(\lambda-L_\text{per})^{-1}$ is in $\mathcal{C}_p$ for all
$p>1$.
\end{thm}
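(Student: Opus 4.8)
The plan is to estimate the singular values of the resolvent via the decay of the eigenvalues, using the fact that the eigenvalues of $\Lp$ accumulate only at $\pm\infty$ and, crucially, with a definite rate. From the Frobenius analysis and the results of \manu, the eigenvalues $\{\lambda_n\}$ grow linearly in $n$ — indeed, the model near $x=0$ and $x=\pi$ together with the condition \eqref{eq:ev_cond} produces an asymptotic distribution of the form $\lambda_n \sim c' n$ as $|n|\to\infty$. Once this linear growth is in hand, one would like to conclude directly that $\sum_n |\lambda - \lambda_n|^{-p} < \infty$ for all $p>1$, which is exactly the statement that the eigenvalues of $(\lambda-\Lp)^{-1}$ are $p$-summable.

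The obstacle is that $\Lp$ is non-self-adjoint and its eigenfunctions do \emph{not} form an unconditional basis (as noted in the introduction), so the singular values $\alpha_n$ of $(\lambda-\Lp)^{-1}$ are \emph{not} simply the moduli of its eigenvalues $(\lambda-\lambda_n)^{-1}$. The clean route around this is to exploit the Green function $G(x,s)$ constructed in Section~\ref{sec:Green}, together with a quantitative bound on its smoothness. Concretely, I would show that $G$, and a sufficient number of its derivatives in $x$, lie in $L^2((-\pi,\pi)^2)$ — equivalently, that the integral operator with kernel $G$ maps $L^2$ into a Sobolev-type space $H^s$ for some $s>1/2$, after accounting for the mild singularities of $\psi$ and $p/f$ at $x=0$ and $x=\pm\pi$ recorded in \eqref{eq:psi_behaviour}, \eqref{eq:p_behaviour}. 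The point is that the bound $|\psi(x,\lambda)p(s)/f(s)|\le C|x|^{-c/\epsilon}|s|^{c/\epsilon}\le C$ used at the end of Section~\ref{sec:Green} already shows $G\in\mathcal{C}_2$; pushing the same computation one derivative further (differentiating $G_\mathrm{I}$, $G_\mathrm{II}$, $G_\mathrm{III}$ in $x$ and checking the resulting kernels are still in $L^2((-\pi,\pi)^2)$, using \eqref{eq:first_integral_der}-type estimates) gives membership of the resolvent in the operators whose singular values are square-summable against the eigenvalue weights $n$, i.e. $\sum \alpha_n^2 n^2 <\infty$ modulo the jump across the diagonal.

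The technical heart is therefore the following interpolation-style argument. Write $R:=(\lambda-\Lp)^{-1}$. From Section~\ref{sec:Green}, $R$ is bounded with bounded kernel, so $R\in\mathcal{C}_2$ and in fact $R$ is bounded as an operator into $L^\infty$. Separately, one shows that $R$ maps $L^2$ into a space $W$ of functions possessing one weak derivative away from $\{0,\pm\pi\}$ with an $L^2$-type control near those points — this follows by differentiating \eqref{eq:gen_sol} once, exactly as in \eqref{eq:gen_sol_der}-\eqref{eq:first_integral_der}, and observing the resulting kernel still satisfies a uniform bound of the form $C|x|^{1/2-\delta}$ near $x=0$. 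The embedding of $W$ into $L^2$ is then in $\mathcal{C}_q$ for every $q>1$ by a classical Weyl-type estimate for the singular values of a one-dimensional Sobolev embedding (the relevant $s$-numbers decay like $n^{-1}$, hence are in $\mathcal{C}_q$ for all $q>1$), and $R = (\text{embedding of }W\hookrightarrow L^2)\circ(\text{bounded map }R\colon L^2\to W)$. Since composing a $\mathcal{C}_q$ operator with a bounded operator stays in $\mathcal{C}_q$, we obtain $R\in\mathcal{C}_q$ for all $q>1$, which is the assertion.

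The main obstacle I anticipate is precisely the corner behaviour at $x=0$ and $x=\pm\pi$: the kernel $G$ is not smooth there, only H\"older-type, so one cannot claim $R$ maps into $H^1$ outright; one must be careful that the logarithmic-type or power weights $|x|^{\pm c/\epsilon}$ appearing in \eqref{eq:p_behaviour}-\eqref{eq:psi_behaviour} conspire to leave enough regularity — and the estimates \eqref{eq:first_integral}-\eqref{eq:first_integral_der} already in the paper show that they do, giving an $|x|^{1/2}$ gain that is comfortably more than the $|x|^\delta$ needed for a fractional Sobolev embedding of positive order. A secondary subtlety is that the diagonal jump in $\partial_x G_\mathrm{I}$ and $\partial_x G_\mathrm{II}$ (from the Wronskian normalization) means $R$ lands in $H^{1/2-\delta}$ rather than $H^1$, but $H^{1/2-\delta}\hookrightarrow L^2$ in one dimension is \emph{also} in $\mathcal{C}_q$ for all $q>1$ provided $1/2-\delta>0$, which still suffices; one just has to track the exponent bookkeeping carefully.
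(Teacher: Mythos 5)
Your proposal takes a genuinely different route from the paper's. The paper does not pass through Sobolev mapping properties at all: it splits the kernel into the pieces $G_\mathrm{I}$, $G_\mathrm{II}$, $G_\mathrm{III}$ already introduced in Section~\ref{sec:Green}, observes that $G_\mathrm{III}$ is rank one, and then exploits the triangular support of $G_\mathrm{II}$ (and analogously the bow-tie support of $G_\mathrm{I}$) together with the product structure $v(x)w(s)$ of the kernel to decompose the operator as a sum over dyadic scales $j$ of \emph{orthogonal} sums of $2^j$ rank-one pieces, each of norm $O(2^{-j})$; the $j$-th layer has $\mathcal C_p$-norm $O\bigl(2^{-j(p-1)/p}\bigr)$, which is summable precisely for $p>1$. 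That is an explicit finite-rank approximation argument with no interpolation or embedding machinery. Your plan — factor $R$ through a Sobolev embedding — is a legitimate alternative in principle, but as written it contains an error that defeats the conclusion.

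The error is your fallback claim that ``$H^{1/2-\delta}\hookrightarrow L^2$ in one dimension is also in $\mathcal C_q$ for all $q>1$ provided $1/2-\delta>0$.'' This is false. The singular values of the embedding $H^s(-\pi,\pi)\hookrightarrow L^2(-\pi,\pi)$ decay like $n^{-s}$, so the embedding lies in $\mathcal C_q$ if and only if $sq>1$, that is $q>1/s$. For $s=1/2-\delta$ one needs $q>2/(1-2\delta)>2$; in particular for $s<1/2$ the embedding is not even Hilbert--Schmidt. So if your Sobolev exponent really were $1/2-\delta$, you would obtain $R\in\mathcal C_q$ only for $q>2$, which is strictly weaker than the theorem and does not improve on the bounded-kernel observation already made at the end of Section~\ref{sec:Green}. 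The ``exponent bookkeeping'' you defer to is exactly what breaks the argument.

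There is also a more minor misconception feeding that error. The jump of $\partial_x G$ across the diagonal $x=s$ coming from the Wronskian normalization is not what limits regularity of $RF$. Because $G$ itself is continuous across the diagonal, the boundary terms produced on differentiating $\int G(x,s)F(s)\,\drm s$ in $x$ cancel, and only $\int(\partial_x G)(x,s)F(s)\,\drm s$ with a piecewise kernel survives; the jump in $\partial_x G$ is harmless. What genuinely threatens regularity is the degenerate behaviour of $f$, $p$, $\phi$, $\psi$ near $x\in\{0,\pm\pi\}$. The way to rescue your route is not to retreat to a fractional exponent but to show that $R$ maps $L^2(-\pi,\pi)$ boundedly into $H^1(-\pi,\pi)$ — which does hold by the closed graph theorem, using the $L^2$-boundedness of $R$ together with the regularity result from \manu\ that $u\in L^2$ and $\Lp u\in L^2$ force $u\in H^1$ — and then use that $H^1\hookrightarrow L^2$ is in $\mathcal C_p$ for all $p>1$. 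Without establishing that full-derivative gain the proposed argument does not deliver the stated range of $p$.
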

\begin{proof}
Let $G_{\mathrm{I}}(x,s), G_{\mathrm{II}}(x,s), G_{\mathrm{III}}(x,s)$ be the components of the Green function $G(x,s)$. It suffices to show that each of the corresponding integral operators $R_{\mathrm{I}}(\lambda), R_{\mathrm{II}}(\lambda), R_{\mathrm{III}}(\lambda)$ is in $\mathcal{C}_p$.

We start by showing that $R_{\mathrm{III}}(\lambda)\in \mathcal{C}_1$.
Indeed, $G_\mathrm{III}(x,s)=v(x)w(s)$ where 
\[
v(x) = \phi(x,\lambda)  \quad \text{and} \quad
w(s) = \psi(s,\lambda)\left(\frac{-\irm p(s)}{\epsilon f(s)}\right) 
/ \left(\displaystyle\frac{\phi(\pi,\lambda)}{\phi(-\pi,\lambda)}-1\right).
\]
By virtue of \eqref{eq:phi_behaviour}, \eqref{eq:psi_behaviour}
and \eqref{eq:p_behaviour}, both $v,w \in L^2(-\pi,\pi)$.
Then $R_\mathrm{III}(\lambda)=|v\rangle \langle w|$ and $\|R_\mathrm{III}(\lambda)\|_1=\|v\|_{L^2(-\pi,\pi)} \|w\|_{L^2(-\pi,\pi)}<\infty$, as required.

Let us now show that $R_{\mathrm{II}}(\lambda)\in \mathcal{C}_p$ for all $p>1$.
Let  
$\Omega_{\mathrm{II}}=\{(x,s)\in [-\pi,\pi]^2\,:\, x\leq s\}$, be the supports of $G_{\mathrm{II}}(x,s)$. Decompose the characteristic function of $\Omega_\mathrm{II}$ as
\[
    \mathds{1}_{\Omega_{\mathrm{II}}}(x,s)=\sum_{j=0}^\infty \sum_{i=0}^{2^j-1}
    \mathds{1}_{I_{2i,j}}(x)\mathds{1}_{I_{2i+1,j}}(s)
\]
where there are $2^{j+1}$ intervals
\[
    I_{i,j}=2\pi\left[\frac{i}{2^{j+1}},\frac{i+1}{2^{j+1}}\right]-\pi=
    \left[\frac{i\pi}{2^{j+1}}-\pi,\frac{(i+1)\pi}{2^{j+1}}-\pi \right],
    \quad i=0,\ldots,2^{j+1}-1.
\]
Then
\[
    G_\mathrm{II}(x,s)=\mathds{1}_{\Omega_\mathrm{II}}(x,s) v(x)w(s)=
    \sum_{j=0}^\infty \sum_{i=0}^{2^j-1}
    v(x)\mathds{1}_{I_{2i,j}}(x)w(s)\mathds{1}_{I_{2i+1,j}}(s).
\]
Let $S_{i,j}=| v\mathds{1}_{I_{2i,j}}\rangle \langle w\mathds{1}_{I_{2i+1,j}} |$.
Then
\[
\alpha_{i,j}:=\|S_{i,j}\|=\|v\|_{L^2(I_{2i,j})} \|w\|_{L^2(I_{2i+1,j})} 
\leq \frac{m}{2^j} 
\]
where $m$ is independent of $i$ and $j$. The constant $m$ depends on $\lambda$ and it is finite as a consequence of \eqref{eq:phi_behaviour}, \eqref{eq:psi_behaviour}
and \eqref{eq:p_behaviour}. Let 
\begin{equation} \label{e.svd_Sj}
    S_j=\sum_{i=0}^{2^{j}-1}S_{i,j}=
     \sum_{i=0}^{2^{j}-1} \alpha_{i,j} \left| \frac{v\mathds{1}_{I_{2i,j}}}{\|v\|_{L^2(I_{2i,j})}}
     \right\rangle \left\langle \frac{w\mathds{1}_{I_{2i+1,j}}}{\|w\|_{L^2(I_{2i+1,j})}} \right|.
\end{equation}
Since the intervals $I_{i,j}$ are pairwise
disjoint for a fixed $j$, the right side of \eqref{e.svd_Sj} is a singular value decomposition for $S_j$. Then
\[
    \|S_j\|_p=\left( \sum_{i=0}^{2^j-1} \alpha_i^p   \right)^{1/p}\leq
    \frac{m}{\left(2^{\frac{p-1}{p}}\right)^j}.
\]
By the triangle inequality, this ensures that
\[
    \|R_\mathrm{II}(\lambda)\|_p<\infty
\]
for all $p>1$ as required.

The fact that $R_{\mathrm{I}}(\lambda)\in \mathcal{C}_p$ for all $p>1$
follows by an analogous decomposition of the support
$\Omega_\mathrm{I}=\{(x,s)\in [-\pi,\pi]^2\,:\, |x|\geq|s|\}$
as the union of disjoint rectangles and a very similar argument.

\end{proof}

\begin{remark}
Let $\lambda_n$ be the eigenvalues of $L_\mathrm{per}$ and $\lambda\not=\lambda_n$.
By virtue of \cite[cor.XI.9.7]{Dunford-Schwartz}, the series $\sum_{n=0}^\infty (\lambda-\lambda_n)^{-p}$ converges absolutely and
\begin{equation} \label{ine}
     \sum_{j=0}^\infty |\lambda-\lambda_n|^{-p} \leq \|(\lambda-L_\mathrm{per})^{-1}\|_p^p
\end{equation}
for all $p>1$. According to the results of \cite{Weir2} on the case $f(x)=(2/\pi)\sin  x$, it is known that $\lambda_n\sim n^2$ as $n\to \infty$. Hence we know that
$(\lambda-L_{\mathrm{per}})^{-1}\not\in \mathcal{C}_{1/2}$. As $L_{\mathrm{per}}\not=L_{\mathrm{per}}^\ast$, the inequality in \eqref{ine} can not generally be reverse for any $p>0$. The question of whether $(\lambda-L_{\mathrm{per}})^{-1}\in\mathcal{C}_p$ for $p>1/2$ will be addressed in subsequent work.
\end{remark}

\end{document}